
\documentclass[reqno, 12pt]{amsart}
\usepackage{amssymb}
\usepackage{amsfonts}
\usepackage{enumerate}
\usepackage{cite}
\usepackage{amsmath}
\usepackage{amsfonts,amssymb,latexsym}
\usepackage[T1]{fontenc}
\usepackage[cp1250]{inputenc}

\setcounter{MaxMatrixCols}{10}

\textheight=23cm \textwidth=13.5cm \topmargin0cm \oddsidemargin1cm \evensidemargin1cm
\footskip1cm
\newtheorem{theorem}{Theorem}[section]
\newtheorem{lemma}[theorem]{Lemma}

\theoremstyle{definition}

\newtheorem{example}{Example}

\theoremstyle{remark}
\newtheorem{remark}[theorem]{Remark}

\DeclareMathOperator{\Fix}{Fix}

\begin{document}
\title[A common fixed point theorem]{A common fixed point theorem for \\
a commuting family of weak$^{\ast }$ continuous nonexpansive mappings}
\author[S. Borzdy\'{n}ski]{S\l awomir Borzdy\'{n}ski}
\author[A. Wi\'{s}nicki]{Andrzej Wi\'{s}nicki}

\begin{abstract}
It is shown that if $\mathcal{S}$ is a commuting family of weak$^{\ast }$
continuous nonexpansive mappings acting on a weak$^{\ast }$ compact convex
subset $C$
of the dual Banach space $E,$ then the set of common fixed points of $%
\mathcal{S}$ is a nonempty nonexpansive retract of $C$. This partially
solves a long-standing open problem in metric fixed point theory in the case
of commutative semigroups.
\end{abstract}

\subjclass[2010]{Primary 47H10; Secondary 46B20, 47H09}
\keywords{Fixed point property; Nonexpansive mapping; Weak topologies;
Commuting mappings; Nonexpansive retract}
\address{S\l awomir Borzdy\'{n}ski, Department of Mathematics, Maria Curie-Sk%
\l odowska University, 20-031 Lublin, Poland}
\email{slawomir.borzdynski@gmail.com}
\address{Andrzej Wi\'{s}nicki, Department of Mathematics, Maria Curie-Sk\l %
odowska University, 20-031 Lublin, Poland}
\email{a.wisnicki@umcs.pl}
\maketitle

\section{Introduction}

A subset $C$ of a Banach space $E$ is said to have the fixed point property
if every nonexpansive mapping $T:C\rightarrow C$ (i.e., $\Vert Tx-Ty\Vert
\leq \Vert x-y\Vert $ for $x,y\in C$) has a fixed point. A general problem,
initiated by the works of F. Browder, D. G\"{o}hde and W. A. Kirk and
studied by numerous authors for over 40 years, is to classify those $E$ and $%
C$ which have the fixed point property. For a fuller discussion on this
topic we refer the reader to \cite{GoKi, KiSi}.

In this paper we concentrate on weak$^{\ast }$ compact convex subsets of the
dual Banach space $E.$ In 1976, L. Karlovitz (see \cite{Ka}) proved that if $%
C$ is a weak$^{\ast }$ compact convex subset of $\ell _{1}$ (as the dual to $%
c_{0}$) then every nonexpansive mapping $T:C\rightarrow C$ has a fixed
point. His result was extended by T.C. Lim \cite{Li} to the case of left
reversible topological semigroups. On the other hand, C. Lennard showed the
example of a weak$^{\ast }$ compact convex subset of $\ell _{1}$ with the
weak$^{\ast }$ topology induced by its predual $c$ and an affine contractive
mapping without fixed points (see \cite[Example 3.2]{Si}). This shows that,
apart from nonexpansiveness, some additional assumptions have to be made to
obtain the fixed points.

Let $S$ be a semitopological semigroup, i.e., a semigroup with a Hausdorff
topology such that for each $t\in S,$ the mappings $s\rightarrow t\cdot s$
and $s\rightarrow s\cdot t$ from $S$ into $S $ are continuous. Consider the
following fixed point property:

\begin{enumerate}
\item[$(F_{\ast })$:] \textit{Whenever $\mathcal{S}=\{T_{s}:s\in S\}$ is a
representation of $S$ as norm-nonexpansive mappings on a non-empty weak$%
^{\ast }$ compact convex set $C$ of a dual Banach space $E$ and the mapping $%
(s,x)\rightarrow T_{s}(x)$ from $S\times C$ to $C$ is jointly continuous,
where $C$ is equipped with the weak$^{\ast }$ topology of $E$, then there is
a common fixed point for $\mathcal{S}$ in $C$.}
\end{enumerate}

It is not difficult to show (see, e.g., \cite[p. 528]{LaTa2}) that property $%
(F_{\ast })$ implies that $S$ is left amenable (in the sense that $LUC(S)$,
the space of bounded complex-valued left uniformly continuous functions on $%
S $, has a left invariant mean). Whether the converse is true is a
long-standing open problem, posed by A. T.-M. Lau in \cite{La2} (see also
\cite[Problem 2]{LaTa2}, \cite[Question 1]{LaZh}).

It is well known that all commutative semigroups are left amenable. The aim
of this paper is to give a partial answer to the above problem by showing
that every commuting family \textit{$\mathcal{S}$ }of weak$^{\ast }$
continuous nonexpansive mappings acting on a weak$^{\ast }$ compact convex
subset $C$ of the dual Banach space $E$ has common fixed points. Moreover,
we prove that the set $\Fix\mathit{\mathcal{S}}$ of fixed points is a
nonexpansive retract of $C.$

Note that the structure of $\Fix\mathit{\mathcal{S}}$ (with $\mathit{%
\mathcal{S}}$ commutative) was examined by R. Bruck (cf. \cite{Br1, Br2})
who proved that if every nonexpansive mapping $T:C\rightarrow C$ has a fixed
point in every nonempty closed convex subset of $C$ which is invariant under
$T$, and $C$ is convex and weakly compact or separable, then $\Fix\mathit{%
\mathcal{S}}$ is a nonexpansive retract of $C.$ We are able to mix the
elements of Bruck's method with some properties of $w^{\ast }$-continuous
and nonexpansive mappings to get the desired result.

\section{Preliminaries}

Let $E$ be the dual of a Banach space $E_{\ast }$. In this paper we focus on
the weak$^{\ast }$ topology -- the smallest one satisfying the condition:
for all $e\in E$, the functional $\hat{e}(x)=x(e)$ is continuous (in the
strong topology). This definition opens up the possibility to consider the
so-called weak$^{\ast }$ properties, for example, $w^{\ast }$-compactness
(compactness in the $w^{\ast }$-topology), $w^{\ast }$-completeness, etc. In
this topology, $E$ becomes a locally convex Hausdorff space. We say that a
dual Banach space $E$ has the $w^{\ast }$-FPP if every nonexpansive
self-mapping defined on a nonempty $w^{\ast }$-compact convex subset of $E$
has a fixed point. It is known that $\ell _{1}=c_{0}^{\ast }$ and some other
Banach lattices have $w^{\ast }$-FPP, however $\ell _{1}=c^{\ast }$ as well
as the duals of $C(\Omega ),$ where $\Omega $ is an infinite compact
Hausdorff topological space, do not possess this property.

A non-void set $D\subset C$ is said to be a nonexpansive retract of $C$ if
there exists a nonexpansive retraction $R:C\rightarrow D$ (i.e., a
nonexpansive mapping $R:C\rightarrow D$ such that $R_{\mid D}=I$). Since we
deal a lot with $w^{\ast }$-continuous nonexpansive mappings, we abbreviate
them to $w^{\ast }$-CN.

We conclude with recalling the following consequence of the Ishikawa theorem
(see \cite{Is}): if $C$ is a bounded convex subset of a Banach space $X$, $%
\gamma \in (0,1),$ and $T:C\rightarrow C$ is nonexpansive, then the mapping $%
T_{\gamma }=(1-\gamma )I+\gamma T$ is asymptotically regular, i.e.,
$\lim_{n\rightarrow \infty }\left\Vert T_\gamma^{n+1}x-T_\gamma^{n}x\right\Vert =0$ for
every $x\in C.$ We use this theorem in Lemma \ref{finiteCommuting}.

\section{Fixed-point theorems}

We begin with a structural result concerning a single $w^{\ast }$-continuous
nonexpansive mapping $T:C\rightarrow C.$

\begin{theorem}
\label{thm1} Let $C$ be a nonempty weak$^{\ast }$ compact convex subset of
the dual Banach space. Then for any $w^{\ast }$-CN self-mapping $T$ on $C$,
the set $\Fix T$ of fixed points of $T$ is a (nonempty) nonexpansive retract
of $C$.
\end{theorem}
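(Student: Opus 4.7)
The plan is to combine Ishikawa's asymptotic regularity theorem with the $w^{\ast}$-compactness of $C$ to build a nonexpansive retraction $R\colon C\to\Fix T$ directly, bypassing any Zorn's-lemma argument. Fix $\gamma\in(0,1)$ and set $T_{\gamma}=(1-\gamma)I+\gamma T$. A convex combination of $w^{\ast}$-CN self-maps of $C$ retains both properties, and clearly $T_{\gamma}x=x$ precisely when $Tx=x$, so $T_{\gamma}$ is itself a $w^{\ast}$-CN self-map of $C$ with $\Fix T_{\gamma}=\Fix T$. Moreover $C$ is norm-bounded (any $w^{\ast}$-compact subset of a dual space is pointwise bounded on $E_{\ast}$ and hence norm-bounded by uniform boundedness), so Ishikawa's theorem applies and yields $\Vert T_{\gamma}^{n+1}x-T_{\gamma}^{n}x\Vert \to 0$ for every $x\in C$.

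Next, fix once and for all a free ultrafilter $\mathcal{U}$ on $\mathbb{N}$. Since $C$ is $w^{\ast}$-compact and Hausdorff, for each $x\in C$ the sequence $(T_{\gamma}^{n}x)_{n}$ admits a unique $w^{\ast}$-limit along $\mathcal{U}$, which I denote by $R(x)$. I claim $R(x)\in\Fix T$: by the $w^{\ast}$-continuity of $T_{\gamma}$,
\[ T_{\gamma}R(x)=w^{\ast}\text{-}\lim_{\mathcal{U}}T_{\gamma}^{n+1}x, \]
and since $T_{\gamma}^{n+1}x-T_{\gamma}^{n}x\to 0$ in norm (hence in $w^{\ast}$), the right-hand side coincides with $R(x)$. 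Thus $T_{\gamma}R(x)=R(x)$, giving $R(x)\in\Fix T_{\gamma}=\Fix T$; in particular $\Fix T$ is nonempty.

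It remains to verify that $R\colon C\to\Fix T$ is a nonexpansive retraction. If $x\in\Fix T$ then $T_{\gamma}^{n}x=x$ for every $n$, so $R(x)=x$, i.e.\ $R$ restricts to the identity on $\Fix T$. For nonexpansiveness, $R(x)-R(y)$ is the $w^{\ast}$-limit along $\mathcal{U}$ of $T_{\gamma}^{n}x-T_{\gamma}^{n}y$, and the norm on $E$ is $w^{\ast}$-lower semicontinuous (being the supremum of the $w^{\ast}$-continuous functionals $\hat{e}$ with $\Vert e\Vert\leq 1$), so
\[ \Vert R(x)-R(y)\Vert \leq \lim_{\mathcal{U}}\Vert T_{\gamma}^{n}x-T_{\gamma}^{n}y\Vert \leq \Vert x-y\Vert. \]
The only genuinely delicate point is the bridge between the norm (in which asymptotic regularity takes place) and the $w^{\ast}$-topology (in which the compactness lies); this bridge is supplied by the trivial inclusion of norm-null sequences into $w^{\ast}$-null sequences, together with the hypothesis that $T$, and hence $T_{\gamma}$, is $w^{\ast}$-continuous. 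I do not anticipate further obstacles.
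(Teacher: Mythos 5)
Your proof is correct, but it takes a genuinely different route from the paper's. The paper builds the approximate fixed points \emph{implicitly}: for each $n$ it solves $F_{n}x=\tfrac{1}{n}x+(1-\tfrac{1}{n})TF_{n}x$ via the Banach contraction principle, which yields the quantitative estimate $\Vert TF_{n}x-F_{n}x\Vert \leq \tfrac{1}{n}\diam C$ and the nonexpansiveness of each $F_{n}$ by a one-line identity; it then extracts a $w^{\ast }$-convergent subnet of $(F_{n})$ in the Tychonoff-compact product $C^{C}$ and finishes exactly as you do, using $w^{\ast }$-continuity of $T$ and $w^{\ast }$-lower semicontinuity of the norm. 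You instead generate the approximate fixed point sequence \emph{explicitly}, as the Krasnoselskii--Mann iterates $T_{\gamma }^{n}x$, invoking Ishikawa's asymptotic regularity theorem, and you replace the Tychonoff subnet by a limit along a fixed free ultrafilter on $\mathbb{N}$ --- an equivalent device (both rest on the ultrafilter lemma, and both give one limiting procedure valid simultaneously for all $x$). What the paper's implicit scheme buys is elementarity and an explicit $O(1/n)$ rate without appeal to Ishikawa; what your scheme buys is that the retraction arises as a limit of iterates of a single nonexpansive self-map, which is precisely the technique the paper itself deploys later, in Lemma \ref{finiteCommuting}, for the inductive step of the finite-family case --- so your argument for Theorem \ref{thm1} is in effect the base-case specialization of that lemma's proof. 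The delicate points (existence and uniqueness of ultrafilter limits in the $w^{\ast }$-compact Hausdorff set $C$, compatibility of $w^{\ast }$-continuity with ultrafilter limits, passage from norm-null to $w^{\ast }$-null differences, and lower semicontinuity of the dual norm as a supremum of $w^{\ast }$-continuous functionals) are all handled correctly.
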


The proof will follow by constructing gradually (and establishing properties
of) three functions, each one defined in the means of the earlier, and the
last one being the retraction from $C$ to $\Fix T$.

\begin{proof}
Notice first that $C$ is complete in the strong topology. Now, for $x\in C$
and a postive integer $n$, consider a mapping $T_{x}:C\rightarrow C$ defined
by

\begin{equation*}
T_{x}z=\frac{1}{n}x+\left( 1-\frac{1}{n}\right) Tz,\;z\in C.
\end{equation*}%
It is not difficult to see that $T_{x}$ is a contraction:
\begin{equation*}
\left\Vert T_{x}y-T_{x}z\right\Vert \leq (1-\frac{1}{n})\left\Vert
y-z\right\Vert .
\end{equation*}%
Hence and from completeness of $C$, it follows from the Banach Contraction
Principle that there exists exactly one point $F_{n}x\in C$ such that $%
T_{x}F_{n}x=F_{n}x$. This defines a mapping $F_{n}:C\rightarrow C$ by%
\begin{equation}
F_{n}x=\frac{1}{n}x+\left( 1-\frac{1}{n}\right) TF_{n}x  \label{mapping}
\end{equation}%
for $x\in C$. Thus

\begin{equation*}
\left\Vert TF_{n}x-F_{n}x\right\Vert =\frac{1}{n}\left\Vert
TF_{n}x-x\right\Vert \leq \frac{1}{n}\mathrm{diam\,}C
\end{equation*}%
and consequently,

\begin{equation*}
\lim_{n}\left\Vert TF_{n}x-F_{n}x\right\Vert =0
\end{equation*}%
since $C$ is bounded in norm as a weak$^{\ast }$ compact subset of a Banach
space.

Notice that for $x\in \Fix T$ we have
\begin{equation*}
T_{x}x=x
\end{equation*}%
and consequently $F_{n}x=x.$

Furthermore, $F_{n}x$ is nonexpansive. Indeed,%
\begin{equation}
F_{n}x-F_{n}y=T_{x}F_{n}x-T_{y}F_{n}y=\frac{1}{n}(x-y)+\left( 1-\frac{1}{n}%
\right) (Tx-Ty)  \label{nonexpansive}
\end{equation}%
which, by putting it into norm and using the triangle inequality and
nonexpansiveness of $T$, gives us a desired statement.

Notice that we can view $C^{C}$ as the product space of copies of $C$, where
each copy is endowed with the w$^{\ast }$-topology. Then, according to
Tychonoff's theorem, $C^{C}$ is compact in the product topology generated in
this way (\textquotedblleft w$^{\ast }$-product topology\textquotedblright
). It follows that a sequence $(F_{n})_{n\in \mathbb{N}}$ of elements from $%
C^{C}$ has a convergent subnet $(F_{n_{\alpha }})_{\alpha \in \Lambda }$ and
we can define
\begin{equation*}
R=w^{\ast }\text{-}\lim_{\alpha }F_{n_{\alpha }},
\end{equation*}%
where the above limit should be understood as taken in the aforementioned $%
w^{\ast }$-product topology. Now we can treat the application of $R$ to some
$x\in C$ as the projection of the mapping onto the $x$-th coordinate and
since such projections are continuous in the product topology, we obtain%
\begin{equation*}
Rx=w^{\ast }\text{-}\lim_{\alpha }F_{n_{\alpha }}x,
\end{equation*}%
where this limit is an ordinary $w^{\ast }$-limit. With this approach, we
are able to construct one subnet which guarantees convergence for all $x\in
C $.

Notice that%
\begin{equation*}
TRx=w^{\ast }\text{-}\lim_{\alpha }TF_{n_{\alpha }}x
\end{equation*}%
since $T$ is weak$^{\ast }$ continuous. Now, it follows from the weak$^{\ast
}$ lower semicontinuity of the norm that for any $x\in C,$

\begin{equation*}
\left\Vert TRx-Rx\right\Vert =\left\Vert w^{\ast }\text{-}\lim_{\alpha
}(TF_{n_{\alpha }}x-F_{n_{\alpha }}x)\right\Vert \leq \liminf_{\alpha
}\left\Vert TF_{n_{\alpha }}x-F_{n_{\alpha }}x\right\Vert =0
\end{equation*}%
and hence
\begin{equation*}
TRx=Rx
\end{equation*}%
which means that $Rx\in \Fix T$. Furthermore, $Rx=x$ if $x\in \Fix T$.

We can now use (\ref{nonexpansive}) and the weak$^{\ast }$ lower
semicontinuity of the norm to prove that $R$ is nonexpansive:
\begin{align*}
\left\Vert Rx-Ry\right\Vert & =\left\Vert \text{w}^{\ast }\text{-}%
\lim_{\alpha }(F_{n_{\alpha }}x-F_{n_{\alpha }}y)\right\Vert  \\
& \leq \liminf_{\alpha }\left\Vert \frac{1}{n_{\alpha }}(x-y)+(1-\frac{1}
{n_{\alpha }})(Tx-Ty)\right\Vert \leq \limsup_{\alpha }\frac{1}{n_{\alpha }}
\left\Vert x-y\right\Vert  \\
& +\limsup_{\alpha }(1-\frac{1}{n_{\alpha }})\left\Vert Tx-Ty\right\Vert
=\left\Vert Tx-Ty\right\Vert \leq \left\Vert x-y\right\Vert .
\end{align*}%
Thus we conclude that $\Fix T$ is indeed a nonexpansive retract of $C.$
\end{proof}

\begin{remark}
The $w^{\ast }$-continuity of $T$ cannot be omitted in the assumptions of
Theorem \ref{thm1}. Indeed, otherwise we would conclude that any dual Banach
space has $w^{\ast }$-FPP. But it is known (see, e.g., \cite[Example 3.2]{Si}%
) that $\ell _{1}$ (as the dual to the Banach space $c$) fails the $w^{\ast
} $-FPP, a contradiction.
\end{remark}

The following example shows that we would not be able to relax the
assumption of the nonexpansiveness of $T$ to continuity, either, even if we
only postulated the existence of a (continuous) retraction.

\begin{example}
Let $\ell _{1}=c_{0}^{\ast }$ and define
\begin{equation*}
T(x_{1},x_{2},x_{3},...)=((x_{1})^{2},0,x_{2},x_{3},...)
\end{equation*}
on the unit ball $B_{\ell _{1}}.$ Notice that $T:B_{\ell _{1}}\rightarrow
B_{\ell _{1}}$ is $w^{\ast }$-continuous and $\Fix T=\{{(\pm 1,0,...)\}.}$
But a non-connected set cannot be a retract of the ball.
\end{example}

Our next objective is to generalize Theorem \ref{thm1} to a commuting family
of $w^{\ast }$-continuous nonexpansive mappings. If $\mathcal{S}%
=\{T_{s}:s\in S\}$ is a family of mappings, we denote by%
\begin{equation*}
\Fix\mathcal{S}=\bigcap_{s\in S}\Fix T_{s}
\end{equation*}%
the set of common fixed points of $\mathcal{S}$.

We first prove a lemma which resembles \cite[Lemma 6]{Br1}.

\begin{lemma}
\label{lem1} Let $\mathcal{S}$ be a family of commuting self-mappings acting
on a set $C$ and suppose that there exists a retraction $R$ of $C$ onto
$\Fix\,\mathcal{S}$. If $\widetilde{T}$ commutes with every element of the
family $\mathcal{S}$, then
\begin{equation*}
\Fix\mathcal{S}\cap \Fix\widetilde{T}=\Fix(\widetilde{T}R).
\end{equation*}
\end{lemma}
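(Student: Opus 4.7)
The plan is to prove equality of the two sets by showing both inclusions directly, with the crucial auxiliary observation that $\widetilde{T}$ leaves $\Fix\mathcal{S}$ invariant.

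For the inclusion $\Fix\mathcal{S}\cap\Fix\widetilde{T}\subseteq\Fix(\widetilde{T}R)$, I would take any $x$ on the left-hand side. Since $x\in\Fix\mathcal{S}$ and $R$ is a retraction onto $\Fix\mathcal{S}$, we have $Rx=x$; combining with $\widetilde{T}x=x$ yields $\widetilde{T}Rx=\widetilde{T}x=x$, so $x\in\Fix(\widetilde{T}R)$. This is essentially bookkeeping.

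The reverse inclusion is where the commutativity hypothesis enters. First I would establish the key auxiliary fact that $\widetilde{T}(\Fix\mathcal{S})\subseteq\Fix\mathcal{S}$: for $y\in\Fix\mathcal{S}$ and any $T_s\in\mathcal{S}$, the identity $T_s\widetilde{T}y=\widetilde{T}T_sy=\widetilde{T}y$ shows that $\widetilde{T}y\in\Fix T_s$ for every $s\in S$, hence $\widetilde{T}y\in\Fix\mathcal{S}$. Now suppose $x\in\Fix(\widetilde{T}R)$, i.e.\ $\widetilde{T}Rx=x$. Since $Rx\in\Fix\mathcal{S}$ and $\widetilde{T}$ preserves $\Fix\mathcal{S}$, we conclude $x=\widetilde{T}Rx\in\Fix\mathcal{S}$. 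Then $Rx=x$ by the retraction property, which plugged back into $\widetilde{T}Rx=x$ gives $\widetilde{T}x=x$, so $x\in\Fix\widetilde{T}$ as well.

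I do not anticipate any real obstacle here; the statement is essentially a formal consequence of commutativity together with the defining property of a retraction. The only point that requires a moment's thought is the invariance $\widetilde{T}(\Fix\mathcal{S})\subseteq\Fix\mathcal{S}$, and it is this invariance that is doing the work of tying $\Fix(\widetilde{T}R)$ back into $\Fix\mathcal{S}$. No continuity, nonexpansiveness, or topological structure on $C$ is needed, which matches the purely set-theoretic flavor of the hypothesis.
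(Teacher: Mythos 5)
Your proposal is correct and follows essentially the same route as the paper: the forward inclusion is the same bookkeeping, and your invariance observation $T_s\widetilde{T}y=\widetilde{T}T_sy=\widetilde{T}y$ applied to $y=Rx$ is exactly the paper's computation $T\widetilde{T}Rx=\widetilde{T}(TRx)=\widetilde{T}Rx$, followed by the same use of the retraction property to conclude $Rx=x$ and $\widetilde{T}x=x$. No discrepancies to report.
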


\begin{proof}
The inclusion from left to right follows from the simple observation that if
$x\in \Fix\mathcal{S}\cap \Fix\widetilde{T},$ then $Rx=x$ and $\widetilde{T}%
x=x.$

For the other direction, assume $x\in \Fix(\widetilde{T}R)$ which means $%
\widetilde{T}Rx=x$. Then, for every $T\in \mathcal{S}$, it follows from the
commutativity and the fact that $Rx\in \Fix T$ that
\begin{equation*}
T\widetilde{T}Rx=\widetilde{T}(TRx)=\widetilde{T}Rx.
\end{equation*}

Therefore $\widetilde{T}Rx\in \Fix T$ for every $T\in \mathcal{S}$ and
consequently%
\begin{equation*}
x=\widetilde{T}Rx\in \Fix\mathcal{S}.
\end{equation*}%
Since $R$ is a retraction onto $\Fix\mathcal{S}$, we have $Rx=x$ and hence $%
\widetilde{T}x=x.$ It follows that $x\in \Fix\mathcal{S}\cap \Fix\widetilde{T%
}$ which proves the inclusion and the whole lemma.
\end{proof}

\begin{lemma}
\label{finiteCommuting} Suppose that $C$ is as in Theorem \ref{thm1} and $%
\mathcal{S}_{n}=\{T_{1},...,T_{n}\}$ is a finite commuting family of $%
w^{\ast }$-CN self-mappings on $C$. Then $\Fix\mathcal{S}_{n}$ is a
nonexpansive retract of $C.$
\end{lemma}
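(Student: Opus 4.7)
My plan is to proceed by induction on $n$. The base case $n=1$ is Theorem \ref{thm1}. For the inductive step, I assume a nonexpansive retraction $R : C \to \Fix\mathcal{S}_{n-1}$ is given and set $S = T_n R$. Since $T_n$ commutes with every element of $\mathcal{S}_{n-1}$, Lemma \ref{lem1} gives $\Fix S = \Fix\mathcal{S}_n$, so it suffices to exhibit a nonexpansive retraction of $C$ onto $\Fix S$.

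The composition $S$ is nonexpansive but not necessarily $w^{\ast}$-continuous (since $R$ may fail to be so), hence Theorem \ref{thm1} does not apply to $S$ directly. My idea is to mimic the Tychonoff/lower-semicontinuity scheme of Theorem \ref{thm1}, but starting from Ishikawa averages in place of Banach resolvents. Fix $\gamma \in (0,1)$ and set $S_\gamma = (1-\gamma)I + \gamma S$. The Ishikawa theorem recalled in the preliminaries yields asymptotic regularity, i.e.,
\begin{equation*}
\|S S_\gamma^k x - S_\gamma^k x\| \longrightarrow 0 \qquad (k \to \infty),\ x\in C.
\end{equation*}
As in Theorem \ref{thm1}, Tychonoff's theorem applied to $C^C$ with the $w^{\ast}$-product topology yields a subnet with $k_\alpha \to \infty$ such that $R'' := w^{\ast}\text{-}\lim_\alpha S_\gamma^{k_\alpha}$ exists. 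Nonexpansiveness of $R''$ follows from $w^{\ast}$-lower semicontinuity of the norm together with nonexpansiveness of each $S_\gamma^{k}$, and $R''x = x$ on $\Fix S$ is immediate, since $S_\gamma x = x$ there.

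The crucial step, and the main obstacle as I see it, is to establish $R''x \in \Fix S$ without the $w^{\ast}$-continuity of $S$. The key observation is that $\Fix\mathcal{S}_{n-1} = \bigcap_{i<n} \Fix T_i$ is $w^{\ast}$-closed (each $T_i$ being $w^{\ast}$-continuous) and invariant under $T_n$ (by commutativity). Decomposing
\begin{equation*}
S_\gamma^{k_\alpha}x = (1-\gamma)\,S_\gamma^{k_\alpha-1}x + \gamma\, T_n R S_\gamma^{k_\alpha-1}x
\end{equation*}
and passing to the $w^{\ast}$-limit using asymptotic regularity, I can exhibit $R''x$ as a $w^{\ast}$-limit of elements $T_n R S_\gamma^{k_\alpha-1}x$ lying in the $w^{\ast}$-closed set $\Fix\mathcal{S}_{n-1}$; hence $R''x \in \Fix\mathcal{S}_{n-1}$, which gives $R R''x = R''x$ and therefore $S R''x = T_n R''x$. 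To conclude, I apply the nonexpansive map $R$ to the estimate $\|T_n R S_\gamma^{k_\alpha}x - S_\gamma^{k_\alpha}x\| \to 0$; since $R T_n R S_\gamma^{k_\alpha}x = T_n R S_\gamma^{k_\alpha}x$, this forces $R S_\gamma^{k_\alpha}x \to R''x$ in $w^{\ast}$, and then the $w^{\ast}$-continuity of $T_n$ alone yields $T_n R''x = w^{\ast}\text{-}\lim T_n R S_\gamma^{k_\alpha}x = R''x$, completing the inductive step.
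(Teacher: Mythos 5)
Your proposal is correct and follows essentially the same route as the paper's proof: induction, the averaged map $(1-\gamma)I+\gamma T_nR$ (the paper fixes $\gamma=\tfrac12$), Ishikawa asymptotic regularity, a Tychonoff subnet of the iterates in $C^C$, and $w^{\ast}$-lower semicontinuity of the norm for nonexpansiveness. The only (cosmetic) divergence is at the end: you place $R''x$ in $\Fix\mathcal{S}_{n-1}$ by exhibiting it as a $w^{\ast}$-limit of points of that $w^{\ast}$-closed set, whereas the paper shows $\Vert T_mS_\gamma^{k_\alpha}x-S_\gamma^{k_\alpha}x\Vert\to 0$ for each $m$ and invokes the $w^{\ast}$-continuity of each $T_m$ separately; your trick of applying $R$ to the approximate-fixed-point estimate and using $RT_nRS_\gamma^{k_\alpha}x=T_nRS_\gamma^{k_\alpha}x$ is exactly the paper's estimate establishing $\Vert R_nr_\alpha-r_\alpha\Vert\to 0$.
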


\begin{proof}
We will show by induction on $n$ that there exists a nonexpansive retraction
$R_{n}$ from $C$ onto $\Fix\mathcal{S}_{n}$. The the base case $n=1$ follows
directly from Theorem \ref{thm1} since $\Fix\mathcal{S}_{1}=\Fix T_{1}.$

Now assume that that there exists a nonexpansive retraction $R_{n}$ of $C$
onto $\Fix\mathcal{S}_{n}$. We need to show the existence of a nonexpansive
retraction $R_{n+1}$ of $C$ onto $\Fix\mathcal{S}_{n+1}$.

Let%
\begin{equation*}
\widetilde{R}_{n}x=\frac{1}{2}x+\frac{1}{2}T_{n+1}R_{n}x,\ x\in C,
\end{equation*}%
and consider a sequence $(\widetilde{R}_{n}^{k})_{k\in \mathbb{N}}$ of
successive iterations of $\widetilde{R}_{n}.$ As in the proof of Theorem \ref%
{thm1}, we can view $C^{C}$ as the product space, compact with respect to
the w$^{\ast }$-topology on $C.$ Hence the sequence $(\widetilde{R}%
_{n}^{k})_{k\in \mathbb{N}}$ has a convergent subnet $(\widetilde{R}%
_{n}^{k_{\alpha }})_{\alpha \in \Lambda }$ and we can define
\begin{equation*}
R_{n+1}x=w^{\ast }\text{-}\lim_{\alpha }\widetilde{R}_{n}^{k_{\alpha }}x
\end{equation*}%
for every $x\in C.$

Since $T_{n+1}R_{n}$ is nonexpansive as a composition of such mappings, it
is easy to see that also $\widetilde{R}_{n}$ is nonexpansive. The
nonexpansiveness of $R_{n+1}$ now follows from the weak$^{\ast }$ lower
semicontinuity of the norm. It is also easy to see that $\Fix %
T_{n+1}R_{n}\subset \Fix R_{n+1}$ and, by using Lemma \ref{lem1}, we
conclude that%
\begin{equation*}
\Fix\mathcal{S}_{n+1}\subset \Fix R_{n+1}.
\end{equation*}%
But this still does not prove that $R_{n+1}$ is a mapping we are looking
for, nor that $\Fix\mathcal{S}_{n+1}$ is nonempty. To complete the proof, we
must show that $R_{n+1}$ is a mapping onto $\Fix\mathcal{S}_{n+1}.$ The rest
of the proof is about showing this fact.

Since $C$ is convex closed and bounded, and $\widetilde{R}_{n}$ is the
convex combination of a nonexpansive mapping and the identity, it follows
from the Ishikawa theorem \cite{Is} that $\widetilde{R}_{n}$ is
asymptotically regular, i.e.,%
\begin{equation*}
\lim_{k\rightarrow \infty }\left\Vert \widetilde{R}_{n}^{k+1}x-\widetilde{R}%
_{n}^{k}x\right\Vert =0
\end{equation*}%
for every $x\in C$.

Now, fix $x$ and notice that $(\widetilde{R}_{n}^{k_{\alpha }}x)_{\alpha \in
\Lambda }$ is an approximate fixed point net for the mapping $T_{n+1}R_{n}$.
To see this, use the equation%
\begin{equation*}
\widetilde{R}_{n}^{k_{\alpha }+1}x=\frac{1}{2}\left( \widetilde{R}%
_{n}^{k_{\alpha }}x-T_{n+1}R_{n}\widetilde{R}_{n}^{k_{\alpha }}x\right)
+T_{n+1}R_{n}\widetilde{R}_{n}^{k_{\alpha }}x
\end{equation*}%
and the asymptotical regularity in the following calculations:%
\begin{align*}
\limsup_{\alpha }\left\Vert T_{n+1}R_{n}\widetilde{R}_{n}^{k_{\alpha }}x-%
\widetilde{R}_{n}^{k_{\alpha }}x\right\Vert & \leq \limsup_{\alpha
}\left\Vert T_{n+1}R_{n}\widetilde{R}_{n}^{k_{\alpha }}x-\widetilde{R}%
_{n}^{k_{\alpha }+1}x\right\Vert \\
+\lim_{\alpha }\left\Vert \widetilde{R}_{n}^{k_{\alpha }+1}x-\widetilde{R}%
_{n}^{k_{\alpha }}x\right\Vert & =\limsup_{\alpha }\left\Vert T_{n+1}R_{n}%
\widetilde{R}_{n}^{k_{\alpha }}x-\widetilde{R}_{n}^{k_{\alpha
}+1}x\right\Vert \\
& =\frac{1}{2}\limsup_{\alpha }\left\Vert T_{n+1}R_{n}\widetilde{R}%
_{n}^{k_{\alpha }}x-\widetilde{R}_{n}^{k_{\alpha }}x\right\Vert .
\end{align*}%
Thus we conclude that%
\begin{equation}
\lim_{\alpha }\left\Vert T_{n+1}R_{n}\widetilde{R}_{n}^{k_{\alpha }}x-%
\widetilde{R}_{n}^{k_{\alpha }}x\right\Vert =0,  \label{map1Lim}
\end{equation}%
as desired.

Now, for brevity, denote $r_{\alpha }=\widetilde{R}_{n}^{k_{\alpha }}x$ and
notice that for every $m\leq n$
\begin{equation*}
T_{m}T_{n+1}R_{n}r_{\alpha }=T_{n+1}T_{m}R_{n}r_{\alpha
}=T_{n+1}R_{n}r_{\alpha }.
\end{equation*}%
That is, $T_{n+1}R_{n}r_{\alpha }\in \Fix T_{m}$ which is equivalent to the
statement that $T_{n+1}R_{n}r_{\alpha }$ belongs to $\Fix\mathcal{S}_{n}$.
It follows that
\begin{equation*}
T_{n+1}R_{n}r_{\alpha }=R_{n}T_{n+1}R_{n}r_{\alpha }.
\end{equation*}%
and using the equation (\ref{map1Lim}), we obtain
\begin{align}
& \limsup_{\alpha }\left\Vert R_{n}r_{\alpha }-r_{\alpha }\right\Vert \leq
\limsup_{\alpha }\left\Vert R_{n}r_{\alpha }-T_{n+1}R_{n}r_{\alpha
}\right\Vert +\lim_{\alpha }\left\Vert T_{n+1}R_{n}r_{\alpha }-r_{\alpha
}\right\Vert  \label{map2Lim} \\
& =\limsup_{\alpha }\left\Vert R_{n}r_{\alpha }-R_{n}T_{n+1}R_{n}r_{\alpha
}\right\Vert \leq \lim_{\alpha }\left\Vert r_{\alpha }-T_{n+1}R_{n}r_{\alpha
}\right\Vert =0.  \notag
\end{align}%
In the same manner we can see that for every $m\leq n$,%
\begin{align*}
\limsup_{\alpha }\left\Vert T_{m}r_{\alpha }-r_{\alpha }\right\Vert & \leq
\limsup_{\alpha }\left\Vert T_{m}r_{\alpha }-T_{m}R_{n}r_{\alpha
}\right\Vert +\limsup_{\alpha }\left\Vert T_{m}R_{n}r_{\alpha }-r_{\alpha
}\right\Vert \\
& \leq \lim_{\alpha }\left\Vert r_{\alpha }-R_{n}r_{\alpha }\right\Vert
+\lim_{\alpha }\left\Vert R_{n}r_{\alpha }-r_{\alpha }\right\Vert =0.
\end{align*}%
Since $T_{m}$ is $w^{\ast }$-continuous, this easily yields%
\begin{equation*}
T_{m}R_{n+1}x=R_{n+1}x
\end{equation*}%
and, consequently,
\begin{equation}
R_{n+1}x\in \Fix\mathcal{S}_{n}.  \label{map3Eq}
\end{equation}%
Finally, by using (\ref{map1Lim}) and (\ref{map2Lim}), we get%
\begin{align*}
& \limsup_{\alpha }\left\Vert T_{n+1}r_{\alpha }-r_{\alpha }\right\Vert \leq
\limsup_{\alpha }\left\Vert T_{n+1}r_{\alpha }-T_{n+1}R_{n}r_{\alpha
}\right\Vert +\lim_{\alpha }\left\Vert T_{n+1}R_{n}r_{\alpha }-r_{\alpha
}\right\Vert \\
& +\lim_{\alpha }\left\Vert T_{n+1}R_{n}r_{\alpha }-r_{\alpha }\right\Vert
\leq \lim_{\alpha }\left\Vert r_{\alpha }-R_{n}r_{\alpha }\right\Vert =0.
\end{align*}%
Then, from the $w^{\ast }$-continuity of $T_{n+1}$,
\begin{equation*}
T_{n+1}R_{n+1}x=R_{n+1}x
\end{equation*}%
which combined with (\ref{map3Eq}), gives
\begin{equation*}
R_{n+1}x\in \Fix\mathcal{S}_{n+1}.
\end{equation*}%
That is, $\Fix\mathcal{S}_{n+1}$ is nonempty and $R_{n+1}$ acts onto it,
which completes the proof.
\end{proof}

We are now in a position to prove our main theorem.

\begin{theorem}
\label{thm3} Suppose that $C$ is as in Theorem \ref{thm1} and $\mathcal{S}$
is an arbitrary family of commuting $w^{\ast }$-CN self-mappings on $C$.
Then $\Fix\mathcal{S}$ is a nonexpansive retract of $C.$
\end{theorem}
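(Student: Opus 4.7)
The plan is to extend Lemma \ref{finiteCommuting} from finite commuting families to arbitrary ones by a net-compactness argument in $C^{C}$ equipped with the $w^{\ast}$-product topology, mirroring what was done in Theorem \ref{thm1} and Lemma \ref{finiteCommuting}.

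Let $\mathcal{D}$ denote the directed set of finite subfamilies of $\mathcal{S}$ ordered by inclusion. For each $\mathcal{F}\in\mathcal{D}$, Lemma \ref{finiteCommuting} supplies a nonexpansive retraction $R_{\mathcal{F}}\colon C\to\Fix\mathcal{F}$; in particular $\Fix\mathcal{F}\neq\emptyset$. Viewing $(R_{\mathcal{F}})_{\mathcal{F}\in\mathcal{D}}$ as a net in the $w^{\ast}$-product-compact space $C^{C}$, I pass to a convergent subnet $(R_{\mathcal{F}_{\alpha}})_{\alpha\in\Lambda}$ and set
\begin{equation*}
Rx = w^{\ast}\text{-}\lim_{\alpha} R_{\mathcal{F}_{\alpha}}x, \qquad x\in C,
\end{equation*}
exactly as in the two earlier proofs.

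Nonexpansiveness of $R$ is immediate from the $w^{\ast}$-lower semicontinuity of the norm together with the nonexpansiveness of each $R_{\mathcal{F}_{\alpha}}$, reproducing the estimate used at the end of Theorem \ref{thm1}. That $R$ restricts to the identity on $\Fix\mathcal{S}$ is automatic: any $x\in\Fix\mathcal{S}$ belongs to $\Fix\mathcal{F}$ for every $\mathcal{F}\in\mathcal{D}$, so $R_{\mathcal{F}}x=x$ for all $\mathcal{F}$ and hence $Rx=x$.

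The crux is showing $Rx\in\Fix\mathcal{S}$ for every $x\in C$, which will simultaneously yield $\Fix\mathcal{S}\neq\emptyset$. Fix $T\in\mathcal{S}$. The set $\{\mathcal{F}\in\mathcal{D}:T\in\mathcal{F}\}$ is cofinal in $\mathcal{D}$, so the original net is eventually in this set; the same holds for the subnet $(\mathcal{F}_{\alpha})$. Thus for $\alpha$ sufficiently large $T\in\mathcal{F}_{\alpha}$ and consequently $TR_{\mathcal{F}_{\alpha}}x=R_{\mathcal{F}_{\alpha}}x$. Taking the $w^{\ast}$-limit and invoking the $w^{\ast}$-continuity of $T$ gives $TRx=Rx$, and since $T$ was arbitrary, $Rx\in\Fix\mathcal{S}$. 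The only step requiring some care is this cofinality/subnet argument that transports fixedness from each finite stage to the limit; everything else is a routine limit computation, given that Lemma \ref{finiteCommuting} has already done the real work.
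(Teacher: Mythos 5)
Your proposal is correct and follows essentially the same route as the paper: a net of retractions $R_{\mathcal{F}}$ over the directed set of finite subfamilies, a $w^{\ast}$-convergent subnet in the product-compact space $C^{C}$, and the observation that the subnet indices eventually dominate $\{T\}$. The only (immaterial) difference is at the last step, where you pass the identity $TR_{\mathcal{F}_{\alpha}}x=R_{\mathcal{F}_{\alpha}}x$ to the limit via the $w^{\ast}$-continuity of $T$, whereas the paper equivalently invokes the $w^{\ast}$-closedness of $\Fix T$.
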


\begin{proof}
If $\mathcal{S}$ is finite, we can use lemma \ref{finiteCommuting}. So
assume that $\mathcal{S}$ is infinite. First notice that
\begin{equation*}
\Fix T=(T-I)^{-1}\{0\}
\end{equation*}%
is closed in the $w^{\ast }$-topology for every $T\in \mathcal{S}$. Let%
\begin{equation*}
\Lambda =\{\alpha \subset \mathcal{S}:\#\alpha <\infty \}
\end{equation*}%
be a directed set with the inclusion relation $\leq $. Denote by $R_{\alpha
} $ the nonexpansive retraction from $C$ to $\Fix_{\alpha }=\bigcap_{T\in
\alpha }\Fix T$ (a more convenient way of writing $\Fix\alpha $) which
existence is guaranteed by Lemma \ref{finiteCommuting}. Then we have a net $%
(R_{\alpha })_{\alpha \in \Lambda }$, and we can select a subnet $(R_{\alpha
_{\gamma }})_{\gamma \in \Gamma }$, $w^{\ast }$-convergent for any $x\in C$.
Define%
\begin{equation*}
Rx=w^{\ast }\text{-}\lim_{\gamma }R_{\alpha _{\gamma }}x.
\end{equation*}%
For a fixed $T\in \,\mathcal{S}$, take $\gamma _{0}$ such that $\alpha
_{\gamma }\geq \{T\}$ for every $\gamma \geq \gamma _{0}$. It exists,
straightforward from the subnet definition. Then%
\begin{equation*}
\forall _{\gamma \geq \gamma _{0}}\,R_{\alpha _{\gamma }}x\in \Fix_{\alpha
_{\gamma }}\subset \Fix_{\alpha _{\gamma _{0}}}\subset \Fix T
\end{equation*}%
and hence $R_{\alpha _{\gamma }}x$ lies eventually in the $w^{\ast }$-closed
set $\Fix T$. Therefore, $Rx\in \Fix T$ for every $T\in \mathcal{S}$ which
implies $Rx\in \Fix\mathcal{S}$. It is easy to see that $R$ is nonexpansive.
Also, for every $\alpha $,%
\begin{equation*}
x\in \Fix\mathcal{S}\implies x\in \Fix_{\alpha }\implies R_{\alpha }x=x,
\end{equation*}%
from which follows%
\begin{equation}
Rx=x,\,x\in \Fix\mathcal{S}.
\end{equation}%
Thus $R$ is a nonexpansive retraction from $C$ onto $\Fix\mathcal{S}$.
\end{proof}

\begin{remark}
In particular, the set $\Fix\mathcal{S}$ is non-empty. Thus Theorem \ref%
{thm3} answers affirmatively \cite[Question 1]{LaZh} in the case of
commutative semigroups.
\end{remark}

\end{document}